\newtheorem{theorem}{Theorem}[section]
\newtheorem{lemma}[theorem]{Lemma}
\newtheorem{remark}[theorem]{Remark}
\newtheorem{example}[theorem]{Example}
\newtheorem{corollary}[theorem]{Corollary}
\newenvironment{proof}[1][Proof:]{\begin{trivlist}
\item[\hskip \labelsep {\bfseries #1}]}{\end{trivlist}}
\journal{ISRN Algebra}
\begin{document}

\begin{frontmatter}

\title{A note on solutions of linear systems}

\medskip

\author{
Branko Male\v sevi\' c\mbox{\small ${\,}^{a}$},
Ivana Jovovi\' c\mbox{\small ${\,}^{a}$},   \\
Milica Makragi\' c\mbox{\small ${\,}^{a}$},
Biljana  Radi\v ci\' c\mbox{\small ${\,}^{b}$}
}

\medskip

\address[etf]{Faculty of Electrical Engineering, University of Belgrade,       \\
Bulevar kralja Aleksandra 73, 11000 Belgrade, Serbia}
\address[matf]{Faculty of Civil Engineering, University of  Belgrade,          \\
Bulevar kralja Aleksandra 73, 11000 Belgrade, Serbia}

\medskip

\begin{abstract}
In this paper we will consider Rohde's general form of $\{1\}$-inverse
of a matrix $A$.
The necessary and sufficient condition for consistency of a linear system $Ax=c$ will be represented.
We will also be concerned with the minimal number of free parameters in Penrose's formula
$x = A^{(1)} c + (I - A^{(1)}A)y$ for obtaining the general solution of the linear system.
This results will be applied for finding the general solution of various homogenous and non- -homogenous linear systems
as well as for different types of matrix equations.
\end{abstract}

\medskip

{\footnotesize
\begin{keyword}
\hspace{-0.5cm}
{\footnotesize
Generalized inverses, linear systems, matrix equations}
\end{keyword}}
\end{frontmatter}

\renewcommand{\thefootnote}{}

\footnotetext{
\hspace{- 8.80 mm}
{\it Email addresses:}

\smallskip

\hspace{-5.0 mm}
{\it  Branko Male\v sevi\' c} $<${\sl malesevic@etf.rs}$>$,
{\it  Ivana Jovovi\' c} $<${\sl ivana@etf.rs}$>$,
{\it  Milica Makragi\' c} $<${\sl milica.makragic@etf.rs}$>$,
{\it  Biljana  Radi\v ci\' c} $<${\sl biljana\textunderscore radicic@yahoo.com}$>$ }

\bigskip

\bigskip

\section{Introduction}

\medskip

In this paper we consider non-homogeneous linear system in $n$ variables
\begin{equation}
\label{Ax=c}
Ax=c,
\end{equation}
where $A$ is an $m \times n$ matrix over the field $\mathbb{C}$ of rank $a$ and $c$ is an $m \times 1$ matrix over $\mathbb{C}$. The set of all $m \times n$ matrices
over the complex field $\mathbb{C}$ will be denoted by $\mathbb{C}^{m \times n}$, $m, n \in \mathbb{N}$. The set of all $m \times n$ matrices over the complex field
$\mathbb{C}$ of rank $a$ will be denoted by $\mathbb{C}_{a}^{m \times n}$. For simplicity of notation, we will write $A_{i\rightarrow}$ ($A_{\downarrow j}$) for
the $i^{th}$ row (the $j^{th}$ column) of the matrix $A \in \mathbb{C}^{m \times n}$.

\break

Any matrix $X$ satisfying the equality $AXA=A$ is called $\{1\}$-inverse of $A$
and is denoted by $A^{(1)}$. The set of all $\{1\}$-inverses of the matrix $A$ is denoted by $A\{1\}$.
It can be shown that $A\{1\}$ is not empty.
If the $n \times n$ matrix $A$ is invertible,
then the equation $AXA=A$ has exactly one solution $A^{-1}$,
so the only $\{1\}$-inverse of the matrix $A$ is its inverse $A^{-1}$,
i.e. $A\{1\}$=$\{A^{-1}\}$. Otherwise, $\{1\}$-inverse of the matrix $A$ is not uniquely determined.
For more informations about $\{1\}$-inverses and various generalized inverses we recommend
A.Ben-Israel and T.N.E. Greville \cite{Ben--IsraelGreville03} and S.L. Campbell and C.D. Meyer \cite{CampbellMeyer09}.

For each matrix $A \in \mathbb{C}_{a}^{m \times n}$
there are regular matrices $P \in \mathbb{C}^{n \times n}$ and $Q \in \mathbb{C}^{m \times m}$ such that
\begin{equation}
\label{QAP=Ea}
QAP=E_{a}=\left[
\begin{array}{c|c}
I_{a} & 0    \\\hline
0     & 0
\end {array}
\right],
\end{equation}
where $I_{a}$ is $a \times a$ identity matrix.
It can be easily seen that every $\{1\}$-inverse of the matrix $A$ can be represented in the form
\begin{equation}
\label{A(1)}
A^{(1)} = P
\left[
\begin{array}{c|c}
I_{a} & U     \\\hline
V     & W
\end {array}
\right]Q
\end{equation}
where $U=[u_{ij}]$, $V=[v_{ij}]$ and $W=[w_{ij}]$ are arbitrary matrices of corresponding dimensions
$a \times (m-a)$, $(n-a) \times a$ and $(n-a) \times (m-a)$ with mutually independent entries,
see C. Rohde \cite{Rohde64} and V. Peri\' c \cite{Peric82}.

We will generalize the results of N. S. Urquhart \cite{Urquhart69}.
Firstly, we explore the minimal numbers of free parameters
in Penrose's formula
$$x = A^{(1)} c + (I - A^{(1)}A)y$$
for obtaining the general solution of the system (\ref{Ax=c}).
Then, we consider relations among the elements of $A^{(1)}$ to obtain
the general solution in the form $x=A^{(1)}c$ of the system (\ref{Ax=c}) for $c \neq 0$.
This construction has previously been used by B. Male\v sevi\' c and B. Radi\v ci\' c \cite{MalesevicRadicic11}
(see also \cite{MalesevicRadicic11b} and \cite{MalesevicRadicic12}).
At the end of this paper we will give an application of this results to the matrix equation $AXB=C$.

\section{The main result}

\medskip

In this section we indicate how technique of an $\{1\}$-inverse may be used to obtain the necessary
and sufficient condition for an existence of a general solution of a non-homogeneous linear system.

\begin{lemma}
\label{c'=Qc}
The non-homogeneous linear system {\rm (\ref{Ax=c})} has a solution if and only if
the last $m-a$ coordinates of the vector $c'=Qc$ are zeros, where
$Q \in \mathbb{C}^{m \times m}$ is regular matrix such that {\rm(\ref{QAP=Ea})} holds.
\end{lemma}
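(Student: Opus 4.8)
The plan is to reduce the system $Ax = c$ to the canonical form given by \eqref{QAP=Ea} and read off the solvability condition directly. Since $Q$ is invertible, the system $Ax = c$ is equivalent to $QAx = Qc$; writing $c' = Qc$ and inserting $PP^{-1} = I$ between $A$ and $x$, this becomes $(QAP)(P^{-1}x) = c'$. Substituting $x' = P^{-1}x$ and using \eqref{QAP=Ea} turns the system into $E_a x' = c'$, which is solvable in $x'$ if and only if the original system is solvable in $x$, because $x \mapsto P^{-1}x$ is a bijection.

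The next step is to analyze $E_a x' = c'$ explicitly. Writing $x'$ in block form with a top part of size $a$ and a bottom part of size $n-a$, and $c'$ with a top part of size $a$ and a bottom part of size $m-a$, the block structure of $E_a$ shows that the first $a$ equations read (top of $x'$) $= $ (top of $c'$), while the last $m-a$ equations read $0 = $ (bottom of $c'$). Hence a solution $x'$ exists precisely when the last $m-a$ coordinates of $c'$ vanish: if they do, one takes the top of $x'$ equal to the top of $c'$ and the bottom of $x'$ arbitrary; conversely, any solution forces those coordinates of $c'$ to be zero. Translating back via $x = Px'$ gives the claim for the original system.

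There is no real obstacle here; the only point requiring a word of care is the bookkeeping of dimensions — that $c' = Qc$ is still an $m \times 1$ vector and splits as $a + (m-a)$, matching the row-block structure of $E_a$, while the unknown $x'$ splits as $a + (n-a)$ matching the column-block structure. Once the block multiplication $E_a x'$ is written out, the equivalence is immediate. I would present the argument in exactly this order: pass to $QAx = Qc$, change variables with $P$, invoke \eqref{QAP=Ea}, and conclude by inspecting the two blocks of equations.
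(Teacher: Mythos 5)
Your proof is correct, but it takes a genuinely different route from the paper. You argue by direct reduction: multiply by $Q$, insert $PP^{-1}$, change variables to $x'=P^{-1}x$, and read the solvability condition off the block structure of $E_a x' = c'$. This is essentially the elementary Kronecker--Capelli-style argument, which the paper explicitly acknowledges ("the proof follows immediately from Kronecker--Capelli theorem") but deliberately sets aside. The paper instead gives a proof via generalized inverses: it invokes Penrose's criterion that $Ax=c$ is consistent if and only if $c = AA^{(1)}c$, computes $AA^{(1)} = Q^{-1}\left[\begin{smallmatrix} I_a & U \\ 0 & 0 \end{smallmatrix}\right]Q$ from Rohde's form (\ref{A(1)}) of $A^{(1)}$, and then shows $(I - AA^{(1)})c = 0$ is equivalent to the vanishing of the last $m-a$ coordinates of $c'=Qc$. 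Your approach is shorter and more self-contained (it needs neither the existence of a $\{1\}$-inverse nor Penrose's consistency criterion); the paper's approach is chosen to stay within the $\{1\}$-inverse framework that the rest of the paper builds on, and the intermediate computation of $AA^{(1)}$ is reused in spirit in Theorem \ref{main theorem 1}. Both are valid proofs of the lemma.
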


\begin{proof}
The proof follows immediately from Kroneker--Capelli theorem.
We provide a new proof of the lemma by using the $\{1\}$-inverse of the system matrix $A$.
The system (\ref{Ax=c}) has a solution if and only if $c=AA^{(1)}c$, see R.~Penrose \cite{Penrose55}.
Since $A^{(1)}$ is described by the equation (\ref{A(1)}), it follows that
$$
AA^{(1)} =
AP\left[
\begin{array}{c|c}
I_{a} & U     \\\hline
V     & W
\end{array}
\right]Q
 =
Q^{-1}\left[
\begin{array}{c|c}
I_{a} & U     \\\hline
0     & 0
\end{array}
\right]Q.
$$
Hence, we have the following equivalences
$$
\!\!
\begin{array}{lcl}
c=AA^{(1)}c
\!& \!\!\Longleftrightarrow\!\! &
(I \!-\! AA^{(1)})c \!=\! 0
\;\Longleftrightarrow\;
\!\left(Q^{-1}Q
-
Q^{-1}\!\left[
\begin{array}{c|c}
I_{a} & U     \\\hline
0     & 0
\end {array}
\right]\!Q\!\right)c \!=\! 0                                \\[3ex]
\!& \!\!\Longleftrightarrow\!\! &
Q^{-1}\left[
\begin{array}{c|c}
0     &  -U     \\\hline
0     &   I_{n-a}
\end {array}
\right]\underbrace{Qc}_{c'}=0
\Longleftrightarrow
\left[
\begin{array}{c|c}
0 &  -U     \\\hline
0 &   I_{n-a}
\end {array}
\right]c'=0          \\[2ex]
\!& \!\!\mathop{\Longleftrightarrow}\limits^{^{{\mbox {\tiny
  $\!\!c'=
  \left[
  \begin{array}{c}
  c_a'        \\
  c_{n-a}'
  \end{array}
  \right]$}}}}\!\! &
\left[
\begin{array}{c|c}
0     & -U     \\\hline
0     & I_{n-a}
\end{array}
\right]
\left[
\begin{array}{c}
c_a'        \\
c_{n-a}'
\end{array}
\right]=0
 \Longleftrightarrow  \left[
\begin{array}{r}
-Uc_{n-a}'        \\
 c_{n-a}'
\end{array}
\right]=0  \\[3ex]
\!& \!\!\Longleftrightarrow\!\! &
c_{n-a}'=0.
\end{array}
$$
Furthermore, we conclude
$c=AA^{(1)}c  \Longleftrightarrow c_{n-a}'=0.$
\qed
\end{proof}

\begin{theorem}
\label{main theorem 1}
The vector
$$x=A^{(1)}c+(I-A^{(1)}A)y,$$
$y \in \mathbb{C}^{n \times 1}$ is an arbitrary column,
is the general solution of the system {\rm (\ref{Ax=c})},
if and only if the $\{1\}$-inverse $A^{(1)}$
of the system matrix $A$ has the form {\rm(\ref{A(1)})} for arbitrary matrices $U$ and $W$
and the rows of the matrix $V(c'_a-y'_a)+y'_{(n-a)}$ are free parameters, where
$Qc=c'=
\left[
\begin{array}{c}
c'_{a}      \\\hline
0
\end {array}
\right]$
and
$P^{-1}y=y'=\left[
\begin{array}{c}
y'_{a}      \\\hline
y'_{n-a}
\end {array}
\right]$.
\end{theorem}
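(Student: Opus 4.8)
The plan is to substitute the Rohde form (\ref{A(1)}) of $A^{(1)}$ directly into Penrose's expression $x = A^{(1)}c + (I - A^{(1)}A)y$ and reduce it, using the normal form $QAP = E_a$, to an explicit parametrized family of vectors; then I will read off exactly when that family is all of the solution set of (\ref{Ax=c}). First I would compute $A^{(1)}A$: writing $A = Q^{-1}E_a P^{-1}$, one gets $A^{(1)}A = P\left[\begin{smallmatrix} I_a & 0 \\ V & 0 \end{smallmatrix}\right]P^{-1}$, hence $I - A^{(1)}A = P\left[\begin{smallmatrix} 0 & 0 \\ -V & I_{n-a} \end{smallmatrix}\right]P^{-1}$. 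Similarly $A^{(1)}c = P\left[\begin{smallmatrix} I_a & U \\ V & W \end{smallmatrix}\right]Qc = P\left[\begin{smallmatrix} I_a & U \\ V & W \end{smallmatrix}\right]\left[\begin{smallmatrix} c'_a \\ 0 \end{smallmatrix}\right] = P\left[\begin{smallmatrix} c'_a \\ V c'_a \end{smallmatrix}\right]$, using Lemma \ref{c'=Qc} to set the last $m-a$ entries of $c' = Qc$ to zero (consistency is assumed, since we are talking about the general solution). Combining, and writing $y' = P^{-1}y = \left[\begin{smallmatrix} y'_a \\ y'_{n-a} \end{smallmatrix}\right]$, I obtain
$$
x = P\left[
\begin{array}{c}
c'_a \\\hline
V(c'_a - y'_a) + y'_{n-a}
\end{array}
\right].
$$

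Next I would identify the solution set of (\ref{Ax=c}) in the same coordinates. Putting $x = P z$ with $z = \left[\begin{smallmatrix} z_a \\ z_{n-a} \end{smallmatrix}\right]$, the system $Ax = c$ becomes $Q^{-1}E_a z = c = Q^{-1}c'$, i.e. $E_a z = c'$, i.e. $z_a = c'_a$ with $z_{n-a}$ completely free. So the solution set is precisely $\{\,P\left[\begin{smallmatrix} c'_a \\ t \end{smallmatrix}\right] : t \in \mathbb{C}^{(n-a)\times 1}\,\}$. Comparing with the boxed formula: the top block $c'_a$ always matches automatically, so the vector $x$ runs over the entire solution set if and only if, as $y$ (equivalently $y'$, since $P$ is invertible) and $U$, $W$ range freely, the bottom block $V(c'_a - y'_a) + y'_{n-a}$ attains every value in $\mathbb{C}^{(n-a)\times 1}$. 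Since $U$ and $W$ do not appear in this bottom block, they are genuinely arbitrary; and the condition that $V(c'_a - y'_a) + y'_{n-a}$ be surjective onto $\mathbb{C}^{(n-a)\times 1}$ as $y'$ varies is exactly the statement that its rows are free parameters.

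The one direction that needs a word of care — and the only real obstacle — is the ``only if'': I must check that if the prescribed coordinates of $V(c'_a - y'_a) + y'_{n-a}$ fail to be free (i.e. the map $y' \mapsto V(c'_a - y'_a) + y'_{n-a}$ is not onto), then Penrose's formula does \emph{not} produce every solution, and conversely that no constraint weaker than ``$A^{(1)}$ has the Rohde form'' suffices. The first point follows because the bottom block then lies in a proper affine subspace of $\mathbb{C}^{(n-a)\times 1}$, so some solution $P\left[\begin{smallmatrix} c'_a \\ t \end{smallmatrix}\right]$ is missed; the second is immediate since (\ref{A(1)}) already characterizes all of $A\{1\}$, so insisting on it is no loss. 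I would also remark that because $y'_{n-a}$ appears additively and unconstrained, surjectivity of the bottom block in $y'$ is automatic regardless of $V$ — so in fact the ``free parameters'' condition is always met — but since the theorem is phrased as a biconditional linking the shape of $A^{(1)}$ to this property, the cleanest writeup is simply the substitution-and-comparison argument above, leaving the reader to observe the automatic surjectivity if desired.
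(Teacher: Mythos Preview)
Your proposal is correct and follows essentially the same route as the paper: substitute the Rohde form of $A^{(1)}$ into Penrose's formula, use $QAP=E_a$ and Lemma~\ref{c'=Qc} to reduce to $x=P\left[\begin{smallmatrix} c'_a\\ V(c'_a-y'_a)+y'_{n-a}\end{smallmatrix}\right]$, and then read off the biconditional from the invertibility of $P$. You go slightly beyond the paper by explicitly identifying the solution set as $\{P\left[\begin{smallmatrix} c'_a\\ t\end{smallmatrix}\right]:t\in\mathbb{C}^{n-a}\}$ and by observing that the free-parameter condition is in fact automatically satisfied; the paper's proof stops at the reduced formula and asserts the equivalence without these additional remarks.
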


\begin{proof}
Since $\{1\}$-inverse $A^{(1)}$ of the matrix $A$ has the form (\ref{A(1)}),
the solution of the system $x=A^{(1)}c+(I-A^{(1)}A)y$ can be represented in the form
$$
\begin{array}{rcl}
x & = &
P\left[
\begin{array}{c|c}
I_{a} & U     \\\hline
V     & W
\end {array}
\right]Qc
+
\left(
I - P\left[
\begin{array}{c|c}
I_{a} & U     \\\hline
V     & W
\end {array}
\right]Q A
\right)y \\[2ex]
& = &
P\left[
\begin{array}{c|c}
I_{a} & U     \\\hline
V     & W
\end {array}
\right]c'
+
\left(
I - P\left[
\begin{array}{c|c}
I_{a} & U     \\\hline
V     & W
\end{array}
\right]Q A P P^{-1}
\right)y.
\end{array}
$$
According to Lemma \ref{c'=Qc} and from (\ref{QAP=Ea}) we have
$$
\begin{array}{rcl}
x   & = &
P\left[
\begin{array}{c|c}
I_{a} & U     \\\hline
V     & W
\end {array}
\right]
\left[
\begin{array}{c}
c'_{a}      \\\hline
0
\end{array}
\right]
+
\left(
I - P\left[
\begin{array}{c|c}
I_{a} & U     \\\hline
V     & W
\end{array}
\right]
\left[
\begin{array}{c|c}
I_{a} & 0     \\\hline
0     & 0
\end{array}
\right]
P^{-1}
\right)y.
\end{array}
$$
Furthermore, we obtain
$$
\begin{array}{rcl}
x  & = &
P\left[
\begin{array}{c}
 c'_{a}      \\\hline
Vc'_{a}
\end{array}
\right] +
\left(
I - P\left[
\begin{array}{c|c}
I_{a} & 0     \\\hline
V     & 0
\end{array}
\right]
P^{-1}
\right)
\left[
\begin{array}{c}
y_{a}      \\\hline
y_{n-a}
\end{array}
\right]     \\[2ex]
& = &
P\left[
\begin{array}{c}
 c'_{a}      \\\hline
Vc'_{a}
\end{array}
\right] +
\left(
PP^{-1} - P\left[
\begin{array}{c|c}
I_{a} & 0     \\\hline
V     & 0
\end{array}
\right]
P^{-1}
\right)
\left[
\begin{array}{c}
y_{a}      \\\hline
y_{n-a}
\end{array}
\right]   \\[2ex]
& = &
P\left[
\begin{array}{c}
 c'_{a}      \\\hline
Vc'_{a}
\end {array}
\right] +
P\left(
I - \left[
\begin{array}{c|c}
I_{a} & 0     \\\hline
V     & 0
\end {array}
\right]
\right)
P^{-1}
\left[
\begin{array}{c}
y_{a}      \\\hline
y_{n-a}
\end{array}
\right]    \\[2ex]
& = &
P\left[
\begin{array}{c}
 c'_{a}      \\\hline
Vc'_{a}
\end {array}
\right] +
P
\left[
\begin{array}{c|c}
 0     & 0     \\\hline
-V     & I_{n-a}
\end {array}
\right]
\left[
\begin{array}{c}
y'_{a}      \\\hline
y'_{n-a}
\end{array}
\right],
\end{array}
$$
where $y'=P^{-1}y$.
We now conclude
$$
\begin{array}{rclrcl}
x  & \!\!=\!\! &
P\left(\left[
\begin{array}{c}
 c'_{a}      \\\hline
Vc'_{a}
\end {array}
\right] +
\left[
\begin{array}{c}
0              \\\hline
-Vy'_{a}+y'_{n-a}
\end{array}
\right]
\right)
& \!\!=\!\! &
P\left[
\begin{array}{c}
 c'_{a}      \\\hline
V(c'_{a}-y'_{a})+y'_{n-a}
\end {array}
\right].
\end{array}
$$
Therefore, since matrix $P$ is regular we deduce that
$P\left[
\begin{array}{c}
 c'_{a}      \\\hline
V(c'_{a}-y'_{a})+y'_{n-a}
\end {array}
\right]$
is the general solution of the system (\ref{Ax=c}) if and only if the rows of the matrix $V(c'_{a}-y'_{a})+y'_{n-a}$
are $n-a$ free parameters.
\qed
\end{proof}

\begin{corollary}
\label{homogeneous}
The vector
$$x=(I-A^{(1)}A)y,$$
$y \in \mathbb{C}^{n \times 1}$ is an arbitrary column,
is the general solution of the homogeneous linear system $Ax=0$,
$A \in \mathbb{C}^{m \times n}$,
if and only if the $\{1\}$-inverse $A^{(1)}$
of the system matrix $A$ has the form {\rm(\ref{A(1)})} for arbitrary matrices $U$ and $W$
and the rows of the matrix $-Vy'_a+y'_{(n-a)}$ are free parameters, where
$P^{-1}y=y'=\left[
\begin{array}{c}
y'_{a}      \\\hline
y'_{n-a}
\end {array}
\right]$.
\end{corollary}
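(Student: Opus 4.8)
The plan is to obtain this corollary as the special case $c=0$ of Theorem \ref{main theorem 1}. First I would observe that the homogeneous system $Ax=0$ is exactly the instance of \eqref{Ax=c} with right-hand side $c=0$, and that this system is trivially consistent: with $c=0$ we have $c'=Qc=0$, so the last $m-a$ coordinates of $c'$ vanish and Lemma \ref{c'=Qc} applies. Hence Theorem \ref{main theorem 1} is available for this choice of $c$, and I would simply substitute $c=0$ into its conclusion.

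Carrying out the substitution, Penrose's formula loses its particular-solution term, since $A^{(1)}c=A^{(1)}\cdot 0=0$, leaving $x=(I-A^{(1)}A)y$ as in the statement. Moreover $Qc=c'=0$ forces $c'_a=0$, so the matrix $V(c'_a-y'_a)+y'_{(n-a)}$ appearing in Theorem \ref{main theorem 1} collapses to $-Vy'_a+y'_{(n-a)}$. Theorem \ref{main theorem 1} then asserts that $x=P\!\left[\begin{array}{c} c'_a \\ V(c'_a-y'_a)+y'_{(n-a)}\end{array}\right]=P\!\left[\begin{array}{c} 0 \\ -Vy'_a+y'_{(n-a)}\end{array}\right]$ is the general solution of $Ax=0$ if and only if the rows of $-Vy'_a+y'_{(n-a)}$ are free parameters, which is precisely the claimed equivalence, with the form \eqref{A(1)} of $A^{(1)}$ (arbitrary $U$, $W$) inherited verbatim from the theorem.

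Alternatively, one could re-run the computation in the proof of Theorem \ref{main theorem 1} from scratch with $c=0$ throughout, arriving at the same reduced expression; this produces no new content. The only point that needs a word of care is that the passage from "the rows of $-Vy'_a+y'_{(n-a)}$ are $n-a$ free parameters" to "$x$ is the general solution" uses the regularity of $P$ to transfer genericity of the lower block of the column to genericity of $x$ itself — but this is immediate since $P$ is invertible, and it is already handled inside Theorem \ref{main theorem 1}. So I expect no real obstacle here; the main thing is to make explicit that consistency holds (so the theorem genuinely applies) and that the parameter count is unaffected by setting $c=0$.

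\begin{proof}
This is the special case $c=0$ of Theorem \ref{main theorem 1}. Indeed, the homogeneous system $Ax=0$ is the system \eqref{Ax=c} with $c=0$, and it is consistent since $c'=Qc=0$ has all its last $m-a$ coordinates equal to zero, so Lemma \ref{c'=Qc} applies. Substituting $c=0$ into Theorem \ref{main theorem 1}, we get $A^{(1)}c=0$, hence $x=A^{(1)}c+(I-A^{(1)}A)y=(I-A^{(1)}A)y$, and $c'_a=0$, so $V(c'_a-y'_a)+y'_{(n-a)}=-Vy'_a+y'_{(n-a)}$. Therefore $x=(I-A^{(1)}A)y$ is the general solution of $Ax=0$ if and only if $A^{(1)}$ has the form \eqref{A(1)} for arbitrary $U$ and $W$ and the rows of $-Vy'_a+y'_{(n-a)}$ are free parameters.
\qed
\end{proof}
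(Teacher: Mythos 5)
Your proof is correct and matches the paper's (implicit) intent: the paper states this corollary without proof precisely because it is the immediate specialization $c=0$ of Theorem \ref{main theorem 1}, which is exactly what you carry out, including the reduction of $V(c'_a-y'_a)+y'_{(n-a)}$ to $-Vy'_a+y'_{(n-a)}$.
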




\begin{example}
Consider the homogeneous linear system
\begin{equation}
\label{example 1}
\begin{array}{rcl}
 x_1+2x_2+3x_3 &=& 0  \\
4x_1+5x_2+6x_3 &=& 0.
\end{array}
\end{equation}
The system matrix is
$$
A=
\left[
\begin{array}{ccc}
1 & 2 & 3 \\
4 & 5 & 6
\end{array}
\right].
$$
For regular matrices
$$
Q=
\left[
\begin{array}{rr}
 1  & 0  \\
-4  & 1
\end{array}
\right] \mbox{\,\,and\,\,}
P=
\left[
\begin{array}{rrr}
1 &  \frac{2}{3} &  1  \\[1ex]
0 & -\frac{1}{3} & -2  \\[1ex]
0 & 0            &  1
\end{array}
\right]
$$
equality (\ref{QAP=Ea}) holds.
Rohde's general $\{1\}$-inverse $A^{(1)}$ of the system matrix $A$ is of the form
$$
A^{(1)}=
P
\left[
\begin{array}{cc}
1      &  0      \\
0      &  1      \\
v_{11} & v_{12}
\end{array}
\right]
Q
$$
According to Corollary \ref{homogeneous} the general solution of the system (\ref{example 1}) is of the form
$$
x=
P\left[
\begin{array}{cc|c}
 0      &  0      & 0  \\
 0      &  0      & 0  \\\hline
-v_{11} & -v_{12} & 1
\end {array}
\right]P^{-1}
\left[
\begin{array}{c}
 y_1   \\
 y_2   \\
 y_3
\end {array}
\right],
$$
where
$$
P^{-1}=
\left[
\begin{array}{rrr}
1 &  2 &  3  \\
0 & -3 & -6  \\
0 & 0  &  1
\end{array}
\right].
$$
Therefore, we obtain
$$
\begin{array}{lcl}
x  & = &
P\left[
\begin{array}{cc|c}
 0      &  0      & 0  \\
 0      &  0      & 0  \\\hline
-v_{11} & -v_{12} & 1
\end {array}
\right]
\left[
\begin{array}{c}
  y_1+2y_2+3y_3   \\
-3y_2-6y_3        \\
  y_3
\end {array}
\right]                  \\[3ex]
   & = &
P
\left[
\begin{array}{c}
 0   \\
 0   \\
 -v_{11}y_1-(2v_{11}-3v_{12}y_2-(3v_{11}-6v_{12}-1)y_3
\end {array}
\right].
\end{array}
$$
If we take $\alpha=-v_{11}y_1-(2v_{11}-3v_{12}y_2-(3v_{11}-6v_{12}-1)y_3$ as a parameter
we get the general solution
$$
x=
\left[
\begin{array}{rrr}
1 &  \frac{2}{3} &  1  \\[1ex]
0 & -\frac{1}{3} & -2  \\[1ex]
0 & 0            &  1
\end{array}
\right]
\left[
\begin{array}{c}
 0   \\
 0   \\
 \alpha
\end {array}
\right]=
\left[
\begin{array}{c}
  \alpha   \\
-2\alpha   \\
  \alpha
\end {array}
\right].
$$
\end{example}

\begin{corollary}
\label{non-homogeneous}
The vector
$$
x=A^{(1)}c
$$
is the general solution of the system {\rm (\ref{Ax=c})},
if and only if the $\{1\}$-inverse $A^{(1)}$
of the system matrix $A$ has the form {\rm(\ref{A(1)})} for arbitrary matrices $U$ and $W$
and the rows of the matrix $Vc'_a$ are free parameters, where
$Qc=c'=
\left[
\begin{array}{c}
c'_{a}      \\\hline
0
\end {array}
\right]$.
\end{corollary}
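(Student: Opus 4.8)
The plan is to derive this directly from Theorem \ref{main theorem 1} by specializing the parameter vector $y$ to zero, and to verify that setting $y = 0$ is legitimate, i.e. that it does not cost us any generality in the solution set. First I would recall that by Theorem \ref{main theorem 1}, with $A^{(1)}$ in the form (\ref{A(1)}), the general solution of (\ref{Ax=c}) is
$x = P\left[\begin{array}{c} c'_{a} \\\hline V(c'_{a}-y'_{a}) + y'_{n-a} \end{array}\right]$,
where $y' = P^{-1}y$ ranges over all of $\mathbb{C}^{n\times 1}$ as $y$ does, and the claim there is that this is the general solution precisely when the $n-a$ rows of $V(c'_a - y'_a) + y'_{n-a}$ are free parameters. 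Taking $y = 0$ (hence $y' = 0$) collapses this to $x = P\left[\begin{array}{c} c'_{a} \\\hline Vc'_{a} \end{array}\right] = A^{(1)}c$, so the candidate formula $x = A^{(1)}c$ produces exactly the vectors obtained from the Penrose formula under the restriction $y=0$.

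Next I would argue the equivalence in both directions. For the ``if'' direction: suppose the rows of $Vc'_a$ are $n-a$ free parameters. Then as $V$ ranges over all admissible matrices, the top block $c'_a$ stays fixed while the bottom block $Vc'_a$ independently realizes every vector in $\mathbb{C}^{(n-a)\times 1}$; since $P$ is regular, $P\left[\begin{array}{c} c'_{a} \\\hline Vc'_{a} \end{array}\right]$ then sweeps out exactly the affine set $\{\,x : Ax = c\,\}$ (which, after the change of coordinates by $P^{-1}$ and using $QAP = E_a$, is precisely the set of vectors whose first $a$ coordinates in the $P$-basis equal $c'_a$). Hence $x = A^{(1)}c$ is the general solution. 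For the ``only if'' direction: if $x = A^{(1)}c$ is the general solution, then in particular it must coincide as a set with the full solution set from Theorem \ref{main theorem 1}; comparing the two, the bottom block $Vc'_a$ must range over all of $\mathbb{C}^{(n-a)\times 1}$, which is exactly the statement that its $n-a$ rows are free parameters. The arbitrariness of $U$ and $W$ is automatic, since they do not appear in $P\left[\begin{array}{c} c'_{a} \\\hline Vc'_{a} \end{array}\right]$ at all, so no constraint on them is needed or implied.

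The main obstacle, and the only place requiring care, is the ``only if'' direction: one must make sure that ``$x = A^{(1)}c$ is the general solution'' is interpreted correctly, namely that as $A^{(1)}$ varies over \emph{all} of $A\{1\}$ the vector $A^{(1)}c$ realizes every solution of (\ref{Ax=c}) and nothing else. Given Lemma \ref{c'=Qc} guarantees $c'_{n-a} = 0$ (so $c' = \left[\begin{array}{c} c'_a \\\hline 0\end{array}\right]$ as assumed in the statement), every $A^{(1)}c$ is automatically a solution; the substance is the surjectivity onto the solution set, which is exactly controlled by whether $V \mapsto Vc'_a$ is surjective onto $\mathbb{C}^{(n-a)\times 1}$, i.e. by the rows of $Vc'_a$ being free parameters. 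I would close by noting this is the $y = 0$ specialization of Corollary \ref{homogeneous}'s non-homogeneous analogue, mirroring the relationship between the particular solution $A^{(1)}c$ and the homogeneous part $(I - A^{(1)}A)y$ in Penrose's formula.
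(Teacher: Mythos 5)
Your proposal is correct and follows essentially the route the paper intends: the corollary is the $y=0$ (hence $y'_a = y'_{n-a} = 0$) specialization of Theorem \ref{main theorem 1}, under which the condition on $V(c'_a - y'_a) + y'_{n-a}$ reduces to the condition on $Vc'_a$, and the solution formula collapses to $x = P\left[\begin{array}{c} c'_a \\ Vc'_a \end{array}\right] = A^{(1)}c$. Your additional care about interpreting ``general solution'' as surjectivity of $V \mapsto Vc'_a$ onto $\mathbb{C}^{(n-a)\times 1}$ (which implicitly needs $c'_a \neq 0$, i.e. $c \neq 0$) is a sound reading of the paper's unproved statement.
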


\begin{remark}
Similar result can be found in paper B. Male\v sevi\' c and B. Radi\v ci\' c {\rm \cite{MalesevicRadicic11}}.
\end{remark}



\begin{example}
Consider the non-homogeneous linear system
\begin{equation}
\label{example 2}
\begin{array}{rcl}
 x_1+2x_2+3x_3 &=& 7  \\
4x_1+5x_2+6x_3 &=& 8.
\end{array}
\end{equation}
According to Corollary \ref{non-homogeneous} the general solution of the system (\ref{example 2}) is of the form
$$
x   =
P\left[
\begin{array}{cc}
 1      &  0       \\
 0      &  1       \\
 v_{11} &  v_{12}
\end {array}
\right]Q
\left[
\begin{array}{c}
 7   \\
 8
\end {array}
\right]
    =
P
\left[
\begin{array}{c}
  7   \\
 -20  \\
7v_{11}-20v_{12}
\end {array}
\right].
$$
If we take $\alpha=7v_{11}-20v_{12}$ as a parameter we obtain the general solution of the system
$$
x   =
P
\left[
\begin{array}{r}
  7   \\
 -20  \\
 \alpha
\end {array}
\right]
   =
\left[
\begin{array}{rrr}
1 &  \frac{2}{3} &  1  \\
0 & -\frac{1}{3} & -2  \\
0 & 0            &  1
\end{array}
\right]
\left[
\begin{array}{c}
  7   \\
 -20  \\
 \alpha
\end{array}
\right]
   =
\left[
\begin{array}{r}
-\frac{19}{3}+\alpha   \\
 \frac{20}{3}-2\alpha  \\
 \alpha
\end{array}
\right].
$$
\end{example}

We are now concerned with the matrix equation
\begin{equation}
\label{AX=C}
AX=C,
\end{equation}
where $A\in\mathbb{C}^{m\times n}$, $X\in\mathbb{C}^{n\times k}$ and $ C\in \mathbb{C}^{m\times k}$.

\begin{lemma}
\label{C'=QC}
The matrix equation {\rm (\ref{AX=C})} has a solution if and only if
the last $m-a$ rows of the matrix $C'=QC$ are zeros, where
$Q \in \mathbb{C}^{m \times m}$ is regular matrix such that {\rm(\ref{QAP=Ea})} holds.
\end{lemma}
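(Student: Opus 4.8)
The plan is to reduce the matrix equation $AX=C$ to a family of vector systems and invoke Lemma~\ref{c'=Qc} columnwise. Writing $X=[X_{\downarrow 1}\,|\,\cdots\,|\,X_{\downarrow k}]$ and $C=[C_{\downarrow 1}\,|\,\cdots\,|\,C_{\downarrow k}]$, the equation $AX=C$ holds if and only if $AX_{\downarrow j}=C_{\downarrow j}$ for every $j\in\{1,\dots,k\}$. By Lemma~\ref{c'=Qc}, each such vector system is consistent if and only if the last $m-a$ coordinates of $QC_{\downarrow j}$ vanish. Since $QC=C'$ has $j^{\text{th}}$ column $QC_{\downarrow j}=C'_{\downarrow j}$, the condition ``last $m-a$ coordinates of $C'_{\downarrow j}$ are zero for all $j$'' is exactly the condition ``the last $m-a$ rows of $C'$ are zero''. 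This yields the claim.

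Alternatively, and perhaps more in keeping with the style of the preceding proof, I would argue directly via the $\{1\}$-inverse. The matrix equation $AX=C$ has a solution if and only if $C=AA^{(1)}C$ (the same Penrose criterion, applied blockwise). Using the identity already established in the proof of Lemma~\ref{c'=Qc}, namely
$$
AA^{(1)}=Q^{-1}\left[\begin{array}{c|c} I_a & U \\\hline 0 & 0\end{array}\right]Q,
$$
the condition $(I-AA^{(1)})C=0$ becomes
$$
Q^{-1}\left[\begin{array}{c|c} 0 & -U \\\hline 0 & I_{n-a}\end{array}\right]QC=0,
$$
and since $Q^{-1}$ is regular this is equivalent to
$$
\left[\begin{array}{c|c} 0 & -U \\\hline 0 & I_{n-a}\end{array}\right]C'=0,
$$
where $C'=QC$. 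Partitioning $C'=\left[\begin{array}{c} C'_a \\\hline C'_{n-a}\end{array}\right]$ with $C'_{n-a}$ consisting of the last $m-a$ rows, the left-hand side equals $\left[\begin{array}{c} -UC'_{n-a} \\\hline C'_{n-a}\end{array}\right]$, which vanishes if and only if $C'_{n-a}=0$, i.e. the last $m-a$ rows of $C'$ are zero.

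I do not expect a genuine obstacle here: both arguments are essentially transcriptions of the vector case. The only point requiring a little care is bookkeeping about which index counts rows versus columns, since $AA^{(1)}$ is $m\times m$ and the block $I_{n-a}$ in the displayed matrix really has size $(m-a)\times(m-a)$ (the notation in~(\ref{A(1)}) uses $n-a$ for the column count of $V$, but here the relevant block size is governed by $m-a$); I would state the partition of $C'$ explicitly to avoid ambiguity. A secondary cosmetic choice is whether to present the columnwise reduction (shorter) or the blockwise $\{1\}$-inverse computation (parallel to Lemma~\ref{c'=Qc}); I would favor the latter for uniformity, perhaps remarking that it also follows at once by applying Lemma~\ref{c'=Qc} to each column.
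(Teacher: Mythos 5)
Your first argument --- splitting $X$ and $C$ into columns and applying Lemma~\ref{c'=Qc} to each system $AX_{\downarrow j}=C_{\downarrow j}$ --- is exactly the paper's proof, and it is correct. Your alternative blockwise computation via $AA^{(1)}$ is also valid (and your remark that the identity block in that display should really be of size $(m-a)\times(m-a)$, not $(n-a)\times(n-a)$, correctly flags a typo carried over from the paper's proof of Lemma~\ref{c'=Qc}).
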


\begin{proof}
If we write $X=[X_{\downarrow 1} \; X_{\downarrow 2} \; \ldots \; X_{\downarrow k}]$ and
$C=[C_{\downarrow 1} \; C_{\downarrow 2} \; \ldots \; C_{\downarrow k}]$,
then we can observe the matrix equation (\ref{AX=C}) as the system of matrix equations
$$
\begin{array}{c}
AX_{\downarrow 1}=C_{\downarrow 1}  \\[2.1 ex]
AX_{\downarrow 2}=C_{\downarrow 2}  \\[2.1 ex]
\vdots                              \\[2.1 ex]
AX_{\downarrow k}=C_{\downarrow k}.
\end{array}
$$
Each of the matrix equation $AX_{\downarrow i}=C_{\downarrow i}$, $1 \leq i \leq k$, by Lemma \ref{c'=Qc} has solution if and only if
the last $m-a$ coordinates of the vector $C'_{\downarrow i}=Q C_{\downarrow i}$ are zeros.
Thus, the previous system has solution if and only if the last $m-a$ rows of the matrix $C'=QC$ are zeros,
which establishes that the matrix equation (\ref{AX=C}) has solution if and only if all entries
of the last $m-a$ rows of the matrix $C'$ are zeros.
\qed
\end{proof}

\begin{theorem}
\label{theorem AX=C}
The matrix
$$X=A^{(1)}C+(I-A^{(1)}A)Y \in \mathbb{C}^{n \times k},$$
$Y \in \mathbb{C}^{n \times k}$ is an arbitrary matrix,
is the general solution of the matrix equation {\rm (\ref{AX=C})}
if and only if the $\{1\}$-inverse $A^{(1)}$
of the system matrix $A$ has the form {\rm(\ref{A(1)})} for arbitrary matrices $U$ and $W$
and the entries of the matrix
$$V(C'_a-Y'_a)+Y'_{(n-a)}$$
are mutually independent free parameters, where
$QC=C'=
\left[
\begin{array}{c}
C'_{a}      \\\hline
0
\end {array}
\right]$
and
$P^{-1}Y=Y'=\left[
\begin{array}{c}
Y'_{a}      \\\hline
Y'_{n-a}
\end {array}
\right]$.
\end{theorem}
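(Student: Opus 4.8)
The plan is to follow the proof of Theorem~\ref{main theorem 1} almost verbatim, replacing the column $c$ by the matrix $C$ and the column $y$ by the matrix $Y$; the $k$ extra columns cause no difficulty because Lemma~\ref{C'=QC} disposes of them all simultaneously. First I would substitute the Rohde form (\ref{A(1)}) of $A^{(1)}$ into $X=A^{(1)}C+(I-A^{(1)}A)Y$ and rewrite everything in terms of $P$, $Q$, $C'=QC$ and $Y'=P^{-1}Y$. Inserting $PP^{-1}$ and using $QAP=E_{a}$ from (\ref{QAP=Ea}) collapses $A^{(1)}A$ to $P\!\left[\begin{array}{c|c} I_{a} & 0 \\\hline V & 0\end{array}\right]\!P^{-1}$, whence $I-A^{(1)}A=P\!\left[\begin{array}{c|c} 0 & 0 \\\hline -V & I_{n-a}\end{array}\right]\!P^{-1}$, exactly as in the vector case.

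Next I would invoke Lemma~\ref{C'=QC}: since we are describing the general solution, the equation (\ref{AX=C}) is consistent, so $C'$ has the stated form with vanishing bottom block. Multiplying out the block products then gives $A^{(1)}C=P\!\left[\begin{array}{c} C'_{a} \\\hline VC'_{a}\end{array}\right]$ and $(I-A^{(1)}A)Y=P\!\left[\begin{array}{c} 0 \\\hline -VY'_{a}+Y'_{n-a}\end{array}\right]$, and summing these two yields $X=P\!\left[\begin{array}{c} C'_{a} \\\hline V(C'_{a}-Y'_{a})+Y'_{n-a}\end{array}\right]$. Because $P$ is invertible, $X$ runs over the entire solution set of (\ref{AX=C}) as $U$, $V$, $W$ and $Y$ vary precisely when $P^{-1}X$ does; and the top block of $P^{-1}X$ is the forced value $C'_{a}$ common to every solution, so this happens exactly when the bottom block $V(C'_{a}-Y'_{a})+Y'_{n-a}$ sweeps out all of $\mathbb{C}^{(n-a)\times k}$.

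The only step requiring real care is the final reformulation, that the bottom block sweeping out $\mathbb{C}^{(n-a)\times k}$ is equivalent to ``the entries of $V(C'_{a}-Y'_{a})+Y'_{n-a}$ being mutually independent free parameters''. For this one notes that $Y'_{n-a}$ is an unconstrained block of $Y'=P^{-1}Y$, so for any admissible $A^{(1)}$ the expression already covers all of $\mathbb{C}^{(n-a)\times k}$; the real content is that the covering is faithful, i.e. that the $(n-a)k$ scalar entries can be prescribed independently, which is precisely the condition that distinguishes the general solution from a merely partial family. I expect this bookkeeping --- isolating the forced top block, and phrasing freeness in terms of the $(n-a)k$ independent parameters rather than of the $n-a$ rows as in Theorem~\ref{main theorem 1} --- to be the only genuine obstacle; the surrounding matrix algebra is a routine transcription of the already-established vector case.
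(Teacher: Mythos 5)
Your argument is correct, and the block algebra you carry out (collapsing $A^{(1)}A$ to $P\left[\begin{array}{c|c} I_{a} & 0 \\\hline V & 0\end{array}\right]P^{-1}$, invoking Lemma~\ref{C'=QC} for the vanishing bottom block of $C'$, and arriving at $X=P\left[\begin{array}{c} C'_{a} \\\hline V(C'_{a}-Y'_{a})+Y'_{n-a}\end{array}\right]$) is exactly the computation underlying the result. The organization differs slightly from the paper's: rather than transcribing the proof of Theorem~\ref{main theorem 1} to matrix-valued $C$ and $Y$, the paper splits $X$ and $C$ into columns, applies Theorem~\ref{main theorem 1} to each system $AX_{\downarrow i}=C_{\downarrow i}$ separately, and then assembles the $k$ column solutions, converting ``$n-a$ free parameters per column'' into ``$(n-a)k$ mutually independent free parameters'' for the whole matrix. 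Your direct matrix-level route buys a cleaner treatment of the one delicate point, namely that the $(n-a)k$ entries must be independent \emph{jointly across columns} and not merely within each column --- in the columnwise assembly this cross-column independence has to be added as an extra requirement (the paper does so essentially by assertion in its ``assembling'' step), whereas in your version it appears automatically as the condition that the bottom block sweeps out all of $\mathbb{C}^{(n-a)\times k}$. You also correctly isolate the genuinely fuzzy part of the statement --- that surjectivity of the bottom block is automatic since $Y'_{n-a}$ is unconstrained, so the real content of the ``mutually independent free parameters'' clause is the faithfulness of the parametrization --- which the paper glosses over in the same way it does in Theorem~\ref{main theorem 1}.
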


\begin{proof}
Applying the Theorem \ref{main theorem 1} on the each system $AX_{\downarrow i}=C_{\downarrow i}$, $1 \leq i \leq k$,
we obtain that
$$X_{\downarrow i}=P\left[
\begin{array}{c}
 C'_{a \downarrow i}      \\\hline
V(C'_{a \downarrow i}-Y'_{a \downarrow i})+Y'_{n-a \downarrow i}
\end{array}
\right]$$
is the general solution of the system if and only if the rows of the matrix
$V(C'_{a \downarrow i}-Y'_{a \downarrow i})+Y'_{n-a \downarrow i}$
are $n-a$ free parameters.
Assembling these individual solutions together we get that
$$
X=P\left[
\begin{array}{c}
 C'_{a}            \\\hline
V(C'_{a}-Y'_{a})+Y'_{n-a}
\end{array}
\right]
$$
is the general solution of the matrix equation $(\ref{AX=C})$
if and only if entries of the matrix $V(C'_{a}-Y'_{a})+Y'_{n-a}$
are $(n-a)k$ mutually independent free parameters.
\qed
\end{proof}

From now on we proceed with the study of the non-homogeneous linear system of the form
\begin{equation}
\label{xB=d}
xB=d,
\end{equation}
where $B$ is an $n \times m$ matrix over the field $\mathbb{C}$ of rank $b$ and $d$ is an $1 \times m$ matrix
over $\mathbb{C}$. Let $R \in \mathbb{C}^{n \times n}$  and $S \in \mathbb{C}^{m \times m}$ be regular matrices
such that
\begin{equation}
\label{RBS=Eb}
RBS=E_{b}=\left[
\begin{array}{c|c}
I_{b} & 0    \\\hline
0     & 0
\end {array}
\right].
\end{equation}
An $\{1\}$-inverse of the matrix $B$ can be represented in the Rohde's form
\begin{equation}
\label{B(1)}
B^{(1)} =
S\left[
\begin{array}{c|c}
I_{b} & M     \\\hline
N     & K
\end {array}
\right]R
\end{equation}
where $M=[m_{ij}]$, $N=[n_{ij}]$ and $K=[k_{ij}]$
are arbitrary matrices of corresponding dimensions $b \times (n-b)$, $(m-b) \times b$ and $(m-b) \times (n-b)$
with mutually independent entries.

\begin{lemma}
\label{d'=dS}
The non-homogeneous linear system {\rm (\ref{xB=d})} has a solution if and only if
the last $m-b$ elements of the row $d'=dS$ are zeros, where
$S \in \mathbb{C}^{m \times m}$ is regular matrix such that {\rm(\ref{RBS=Eb})} holds.
\end{lemma}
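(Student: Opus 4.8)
The plan is to reduce Lemma~\ref{d'=dS} to the already-established Lemma~\ref{c'=Qc} by transposing the system. Taking transposes in $xB=d$ gives $B^{T}x^{T}=d^{T}$, which is a non-homogeneous linear system of the form~(\ref{Ax=c}) with system matrix $A:=B^{T}\in\mathbb{C}^{m\times n}$ of rank $b$ (since $\operatorname{rank}B^{T}=\operatorname{rank}B=b$), unknown column $x^{T}$, and right-hand side $c:=d^{T}\in\mathbb{C}^{m\times 1}$. Clearly $xB=d$ has a solution if and only if $B^{T}x^{T}=d^{T}$ does.

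Next I would produce the factorization~(\ref{QAP=Ea}) for $A=B^{T}$ out of the given factorization~(\ref{RBS=Eb}) for $B$. Transposing $RBS=E_{b}$ yields $S^{T}B^{T}R^{T}=E_{b}^{T}=E_{b}$, because $E_{b}$ is symmetric. Hence, setting $Q:=S^{T}\in\mathbb{C}^{m\times m}$ and $P:=R^{T}\in\mathbb{C}^{n\times n}$, both regular, we have $QAP=E_{b}$, i.e. exactly the normal form~(\ref{QAP=Ea}) with $a$ replaced by $b$. Lemma~\ref{c'=Qc} now applies verbatim to the system $B^{T}x^{T}=d^{T}$: it has a solution if and only if the last $m-b$ coordinates of the column vector $c':=Qc=S^{T}d^{T}$ are zero.

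Finally I would translate the condition back into the row-vector language of~(\ref{xB=d}). Since $c'=S^{T}d^{T}=(dS)^{T}$, the column $c'$ is precisely the transpose of the row $d':=dS$, so the last $m-b$ coordinates of $c'$ vanish if and only if the last $m-b$ entries of the row $d'$ vanish. Combining this with the equivalence ``$xB=d$ solvable $\iff$ $B^{T}x^{T}=d^{T}$ solvable'' from the first step gives exactly the assertion of the lemma.

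I do not expect any genuine obstacle here; the whole proof is a bookkeeping exercise in transposition, the only points requiring a word of care being that $\operatorname{rank}$ and the normal-form matrix $E_{b}$ are invariant under transposition, and that one must keep straight which of $R,S$ becomes the left multiplier and which the right multiplier after transposing (it is $S^{T}$ that plays the role of $Q$). Alternatively, one could give a direct proof mimicking the $\{1\}$-inverse argument of Lemma~\ref{c'=Qc} using the Rohde form~(\ref{B(1)}) and the solvability criterion $d=dB^{(1)}B$, but the transpose reduction is shorter and reuses work already done.
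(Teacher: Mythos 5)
Your proposal is correct and follows essentially the same route as the paper: transpose $xB=d$ to $B^{T}x^{T}=d^{T}$, transpose $RBS=E_{b}$ to get $S^{T}B^{T}R^{T}=E_{b}$, and apply Lemma~\ref{c'=Qc} with $Q=S^{T}$, noting $(dS)^{T}=S^{T}d^{T}$. Your write-up merely spells out the bookkeeping (rank invariance, symmetry of $E_{b}$) that the paper leaves implicit.
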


\begin{proof}
By transposing the system (\ref{xB=d}) we obtain system $B^{T}x^{T}=d^{T}$ and
by transposing the matrix equation (\ref{RBS=Eb}) we obtain that $S^{T}B^{T}R^{T}=E_{b}$.
According to Lemma \ref{c'=Qc} the system $B^{T}x^{T}=d^{T}$ has solution if and only if
the last $m-b$ coordinates of the vector $S^{T}d^{T}$ are zeros, i.e. if and only if
the last $m-b$ elements of the row $d'=dS$ are zeros.
\qed
\end{proof}

\begin{theorem}
\label{main theorem 2}
The row
$$x=dB^{(1)}+y(I-BB^{(1)}),$$
$y \in \mathbb{C}^{1 \times n}$ is an arbitrary row,
is the general solution of the system {\rm (\ref{xB=d})},
if and only if the $\{1\}$-inverse $B^{(1)}$
of the system matrix $B$ has the form {\rm(\ref{B(1)})} for arbitrary matrices $N$ and $K$
and the columns of the matrix $(d'_b-y'_b)M+y'_{n-b}$ are free parameters, where
$dS=d'=
\left[ d'_{b} \;|\; 0 \right]$
and
$yR^{-1}=y'=\left[ y'_{b} \;|\; y'_{n-b} \right]$.
\end{theorem}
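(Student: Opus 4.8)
The plan is to reduce Theorem \ref{main theorem 2} to the already-established Theorem \ref{main theorem 1} by transposition, exactly as Lemma \ref{d'=dS} was deduced from Lemma \ref{c'=Qc}. First I would transpose the system (\ref{xB=d}) to obtain $B^{T}x^{T}=d^{T}$, which is a system of the form (\ref{Ax=c}) with system matrix $A=B^{T}\in\mathbb{C}^{m\times n}$ of rank $b$. Transposing (\ref{RBS=Eb}) gives $S^{T}B^{T}R^{T}=E_{b}$, so the regular matrices playing the roles of $Q$ and $P$ in (\ref{QAP=Ea}) are $Q=S^{T}$ and $P=R^{T}$. Next I would observe that transposing Rohde's form (\ref{B(1)}) yields
$$
(B^{(1)})^{T}=R^{T}\left[
\begin{array}{c|c}
I_{b} & N^{T}     \\\hline
M^{T}     & K^{T}
\end {array}
\right]S^{T},
$$
which is precisely the general form (\ref{A(1)}) of a $\{1\}$-inverse of $B^{T}$, with the blocks $U,V,W$ of Theorem \ref{main theorem 1} instantiated as $U=N^{T}$, $V=M^{T}$, $W=K^{T}$. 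Since $N$ and $K$ are arbitrary iff $N^{T}$ and $K^{T}$ are, the phrase ``arbitrary matrices $N$ and $K$'' in the statement corresponds correctly to ``arbitrary matrices $U$ and $W$'' in Theorem \ref{main theorem 1}.

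Then I would transpose Penrose's formula: from $x=dB^{(1)}+y(I-BB^{(1)})$ we get $x^{T}=(B^{(1)})^{T}d^{T}+(I-(B^{(1)})^{T}B^{T})y^{T}$, which is exactly the Penrose formula of Theorem \ref{main theorem 1} for the system $B^{T}x^{T}=d^{T}$ with arbitrary column $y^{T}\in\mathbb{C}^{n\times1}$. Applying Theorem \ref{main theorem 1} directly, and using Lemma \ref{d'=dS} to write $d^{T}$ in the block form required (last $m-b$ coordinates zero), I would translate the notation: there $c'_{a}$ becomes $(d'_{b})^{T}$ since $Qc=S^{T}d^{T}=(dS)^{T}=(d')^{T}$, and the decomposition $P^{-1}y=y'$ becomes $(R^{T})^{-1}y^{T}=(R^{-1})^{T}y^{T}=(yR^{-1})^{T}$, so $y'_{a}$ and $y'_{n-a}$ of Theorem \ref{main theorem 1} become $(y'_{b})^{T}$ and $(y'_{n-b})^{T}$ in the present notation. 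Theorem \ref{main theorem 1} then asserts that $x^{T}$ is the general solution iff the rows of $V(c'_{a}-y'_{a})+y'_{n-a}=M^{T}\big((d'_{b})^{T}-(y'_{b})^{T}\big)+(y'_{n-b})^{T}=\big((d'_{b}-y'_{b})M+y'_{n-b}\big)^{T}$ are free parameters.

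Finally I would untranspose: the rows of $\big((d'_{b}-y'_{b})M+y'_{n-b}\big)^{T}$ are free parameters iff the columns of $(d'_{b}-y'_{b})M+y'_{n-b}$ are free parameters, and $x^{T}$ is the general solution of $B^{T}x^{T}=d^{T}$ iff $x$ is the general solution of $xB=d$. This gives exactly the claimed equivalence. I expect no real obstacle here; the only thing that requires care is bookkeeping the correspondence of block sizes and of the primed vectors under transposition (in particular matching the partition indices $a\leftrightarrow b$ and $n-a\leftrightarrow n-b$, and verifying that ``mutually independent entries'' is preserved by transposition), which is routine. I would present the argument compactly, citing Lemma \ref{d'=dS} and Theorem \ref{main theorem 1}, rather than repeating the block computation done in the proof of Theorem \ref{main theorem 1}.
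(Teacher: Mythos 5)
Your proposal is correct and follows essentially the same route as the paper: transpose the system to $B^{T}x^{T}=d^{T}$, identify $(B^{(1)})^{T}=R^{T}\left[\begin{array}{c|c}I_{b} & N^{T}\\\hline M^{T} & K^{T}\end{array}\right]S^{T}$ as a Rohde form for $B^{T}$, invoke Theorem \ref{main theorem 1}, and untranspose. If anything, your version is slightly tidier in that it cites Theorem \ref{main theorem 1} as a black box with explicit substitutions $U=N^{T}$, $V=M^{T}$, $W=K^{T}$, where the paper says only ``proceed analogously to the proof.''
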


\begin{proof}
The basic idea of the proof is to transpose the system (\ref{xB=d}) and
to apply the Theorem \ref{main theorem 1}. The $\{1\}$-inverse of the matrix
$B^{T}$ is equal to a transpose of the $\{1\}$-inverse of the matrix $B$.
Hence, we have
$$(B^{T})^{(1)}=(B^{(1)})^{T}=
\left(S\left[
\begin{array}{c|c}
I_{b} & M     \\\hline
N     & K
\end {array}
\right]R\right)^{T}=
R^{T}
\left[
\begin{array}{c|c}
I_{b} & N^{T}     \\\hline  \\[-2ex]
M^{T} & K^{T}
\end {array}
\right]S^{T}.
$$
We can now proceed analogously to the proof of the Theorem \ref{main theorem 1} to obtain that
$$x^{T}=R^{T}\left[
\begin{array}{c}
 d'^{\,T}_{b}      \\\hline \\[-2ex]
M^{T}(d'^{\,T}_{b}-y'^{\,T}_{b})+y'^{\,T}_{n-b}
\end {array}
\right]$$
is the general solution of the system $B^{T}x^{T}=d^{T}$ if and only if
the rows of the matrix $M^{T}(d'^{\,T}_{b}-y'^{\,T}_{b})+y'^{\,T}_{n-b}$
are $n-b$ free parameters.
Therefore,
$$x=
\left[
d'_{b}   \;|\;
(d'_{b}-y'_{b})M+y'_{n-b}
\right]R
$$
is the general solution of the system (\ref{xB=d}) if and only if the columns of the matrix $(d'_{b}-y'_{b})M+y'_{n-b}$
are $n-b$ free parameters.
\qed
\end{proof}

\noindent
Analogous corollaries hold for the Theorem \ref{main theorem 2}.

\bigskip

We now deal with the matrix equation
\begin{equation}
\label{XB=D}
XB=D,
\end{equation}
where $X\in\mathbb{C}^{k \times n}$, $B\in\mathbb{C}^{n \times m}$  and $D\in\mathbb{C}^{k \times m}$.

\begin{lemma}
\label{D'=DS}
The matrix equation {\rm (\ref{XB=D})} has a solution if and only if
the last $m-b$ columns of the matrix $D'=DS$ are zeros, where
$S \in \mathbb{C}^{m \times m}$ is regular matrix such that {\rm(\ref{RBS=Eb})} holds.
\end{lemma}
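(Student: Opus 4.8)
The plan is to mirror the proof of Lemma \ref{c'=Qc}, exploiting the fact that equation (\ref{XB=D}) is the column-analogue — indeed the transpose — of the row system treated in Lemma \ref{d'=dS}, just as Lemma \ref{C'=QC} was the column-analogue of Lemma \ref{c'=Qc}. First I would split the unknown and the right-hand side into rows: writing $X = [X_{1\rightarrow}^{T} \; X_{2\rightarrow}^{T} \; \ldots \; X_{k\rightarrow}^{T}]^{T}$ and $D = [D_{1\rightarrow}^{T} \; D_{2\rightarrow}^{T} \; \ldots \; D_{k\rightarrow}^{T}]^{T}$, the matrix equation (\ref{XB=D}) decomposes into the $k$ independent row systems $X_{i\rightarrow} B = D_{i\rightarrow}$ for $1 \leq i \leq k$.

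Next I would invoke Lemma \ref{d'=dS} on each of these: the system $X_{i\rightarrow} B = D_{i\rightarrow}$ has a solution if and only if the last $m-b$ entries of the row $D_{i\rightarrow} S$ are zeros. Since the $i^{th}$ row of $D' = DS$ is precisely $D_{i\rightarrow} S$, the full equation (\ref{XB=D}) is solvable if and only if every one of these rows has its last $m-b$ entries equal to zero, i.e.\ if and only if all entries lying in the last $m-b$ columns of $D'$ vanish. That is exactly the claimed condition that the last $m-b$ columns of $D' = DS$ are zero.

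An alternative, equally short route is to transpose: (\ref{XB=D}) is equivalent to $B^{T} X^{T} = D^{T}$, and transposing (\ref{RBS=Eb}) gives $S^{T} B^{T} R^{T} = E_{b}$, so $S^{T}$ plays the role of the matrix $Q$ in Lemma \ref{C'=QC} applied to the equation $B^{T} X^{T} = D^{T}$. By Lemma \ref{C'=QC} that equation is solvable iff the last $m-b$ rows of $S^{T} D^{T} = (DS)^{T} = D'^{\,T}$ are zero, which says precisely that the last $m-b$ columns of $D'$ are zero. I would probably present the first (row-splitting) argument in the main text, since it parallels the proof of Lemma \ref{C'=QC} verbatim. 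There is no real obstacle here — the only thing to be careful about is the bookkeeping of which dimension ($m-b$ versus $n-b$) and which aspect (rows versus columns) the consistency condition constrains, so that the statement about columns of $D'$ is not accidentally swapped with a statement about rows.
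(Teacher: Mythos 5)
Your proof is correct; the paper actually states Lemma \ref{D'=DS} without proof, and your argument supplies exactly the omitted reasoning in the pattern the paper establishes — splitting $X$ and $D$ into rows and applying Lemma \ref{d'=dS} rowwise, just as Lemma \ref{C'=QC} splits into columns and applies Lemma \ref{c'=Qc}. Your alternative transposition route via Lemma \ref{C'=QC} is also valid and matches how the paper handles Lemma \ref{d'=dS} and Theorem \ref{main theorem 2}.
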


\begin{theorem}
\label{theorem XB=D}
The matrix
$$X=DB^{(1)}+Y(I-BB^{(1)}) \in \mathbb{C}^{k \times n},$$
$Y \in \mathbb{C}^{k \times n}$ is an arbitrary matrix,
is the general solution of the matrix equation {\rm (\ref{XB=D})}
if and only if the $\{1\}$-inverse $B^{(1)}$
of the system matrix $B$ has the form {\rm(\ref{B(1)})} for arbitrary matrices $N$ and $K$
and the entries of the matrix
$$(D'_b-Y'_b)M+Y'_{(n-b)}$$
are mutually independent free parameters, where
$DS=D'=
\left[ D'_{b} \;|\; 0 \right]$
and
$YR^{-1}=Y'=\left[ Y'_{b} \;|\; Y'_{n-b} \right]$.
\end{theorem}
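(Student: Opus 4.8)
The plan is to mirror the strategy used for Theorem \ref{theorem AX=C}, transferring it to the right-multiplication setting via transposition. First I would observe that the matrix equation (\ref{XB=D}) is equivalent to $B^{T}X^{T}=D^{T}$, with $X^{T}\in\mathbb{C}^{n\times k}$ and $D^{T}\in\mathbb{C}^{m\times k}$, and that from (\ref{RBS=Eb}) one gets $S^{T}B^{T}R^{T}=E_{b}$, so $S^{T}$ and $R^{T}$ play the roles of $Q$ and $P$ from (\ref{QAP=Ea}) for the matrix $B^{T}$. Moreover, since $(B^{(1)})^{T}$ is a $\{1\}$-inverse of $B^{T}$, the Rohde form (\ref{B(1)}) of $B^{(1)}$ transposes into the Rohde form (\ref{A(1)}) for $B^{T}$ with $U$ replaced by $N^{T}$, $V$ by $M^{T}$, and $W$ by $K^{T}$ — exactly as already computed in the proof of Theorem \ref{main theorem 2}.

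Next I would apply Theorem \ref{theorem AX=C} directly to $B^{T}X^{T}=D^{T}$. Writing $D^{T}=[\,(D^{T})'_{b}\,;\,0\,]$ after multiplication by $S^{T}$ (which by Lemma \ref{D'=DS} is the solvability condition, since the last $m-b$ columns of $D'=DS$ being zero is the same as the last $m-b$ rows of $S^{T}D^{T}$ being zero) and $Y^{T}$ transformed by $(R^{T})^{-1}=(R^{-1})^{T}$ into $[\,(Y^{T})'_{b}\,;\,(Y^{T})'_{n-b}\,]$, Theorem \ref{theorem AX=C} yields that
$$X^{T}=R^{T}\left[\begin{array}{c} (D^{T})'_{b} \\\hline M^{T}\big((D^{T})'_{b}-(Y^{T})'_{b}\big)+(Y^{T})'_{n-b}\end{array}\right]$$
is the general solution of $B^{T}X^{T}=D^{T}$ if and only if the entries of $M^{T}\big((D^{T})'_{b}-(Y^{T})'_{b}\big)+(Y^{T})'_{n-b}$ are $(n-b)k$ mutually independent free parameters.

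Finally I would transpose back. Since $(D^{T})'=S^{T}D^{T}=(DS)^{T}=(D')^{T}$, the block $(D^{T})'_{b}$ is $(D'_{b})^{T}$, and likewise $(Y^{T})'_{b}=(Y'_{b})^{T}$, $(Y^{T})'_{n-b}=(Y'_{n-b})^{T}$; hence the free-parameter block transposes to $(D'_{b}-Y'_{b})M+Y'_{n-b}$, and transposing the displayed formula for $X^{T}$ gives
$$X=\left[\,D'_{b}\;|\;(D'_{b}-Y'_{b})M+Y'_{n-b}\,\right]R,$$
with the condition becoming: the entries of $(D'_{b}-Y'_{b})M+Y'_{n-b}$ are $(n-b)k$ mutually independent free parameters. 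This is precisely the assertion of the theorem. The only genuinely delicate point is the bookkeeping of the transpose correspondence — checking that "rows" versus "columns" and "$U,V,W$" versus "$M,N,K$" line up consistently, and that the solvability conditions from Lemma \ref{C'=QC} and Lemma \ref{D'=DS} match under transposition — but this is routine once the dictionary $Q\leftrightarrow S^{T}$, $P\leftrightarrow R^{T}$, $V\leftrightarrow M^{T}$ is fixed, exactly as in Theorem \ref{main theorem 2}. No new idea beyond transposition plus Theorem \ref{theorem AX=C} is needed.
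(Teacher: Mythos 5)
Your proposal is correct and matches the paper's intended argument: the paper states Theorem \ref{theorem XB=D} without an explicit proof, leaving it to follow by transposition from Theorem \ref{theorem AX=C} in exactly the way Theorem \ref{main theorem 2} is derived from Theorem \ref{main theorem 1}, which is precisely the dictionary $Q\leftrightarrow S^{T}$, $P\leftrightarrow R^{T}$, $V\leftrightarrow M^{T}$ you set up. The bookkeeping of blocks, dimensions, and the $(n-b)k$ free parameters all checks out.
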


\section{An application}

\medskip

In this section we will briefly sketch properties of the general solution of the matrix equation
\begin{equation}
\label{AXB=C}
AXB=C,
\end{equation}
where $A\in \mathbb{C}^{m \times n}$, $X \in \mathbb{C}^{n \times k}$,
$B \in \mathbb{C}^{k \times l}$ and $C \in \mathbb{C}^{m \times l}$.
If we denote by $Y$ matrix product $XB$, then the matrix equation (\ref{AXB=C}) becomes
\begin{equation}
\label{AY=C}
AY=C.
\end{equation}
According to the Theorem \ref{theorem AX=C} the general solution of the system (\ref{AY=C})
can be presented as a product of the matrix $P$ and the matrix which has the first $a=rank(A)$ rows
same as the matrix $QC$ and the elements of the last $m-a$ rows are $(m-a)n$ mutually independent free parameters,
$P$ and $Q$ are regular matrices such that $QAP=E_{a}$.
Thus, we are now turning on to the system of the form
\begin{equation}
\label{XB=D new}
XB=D.
\end{equation}
By the Theorem \ref{theorem XB=D} we conclude that the general solution of the system (\ref{XB=D new})
can be presented as a product of the matrix which has the first $b=rank(B)$ columns equal
to the first $b$ columns of the matrix $DS$ and the rest of the columns have
mutually independent free parameters as entries, and the matrix $R$,
for regular matrices $R$ and $S$ such that $RBS=E_{b}$.
Therefore, the general solution of the system (\ref{AXB=C}) is of the form
$$
X = P
\left[
\begin{array}{c|c}
G_{ab} & F     \\\hline
H      & L
\end {array}
\right] R,
$$
where $G_{ab}$ is a submatrix of the matrix $QCS$ corresponding to the first $a$ rows and the first $b$ columns
and the entries of the matrices $F$, $H$ and $L$ are $nk-ab$ free parameters.
We will illustrate this on the following example.

\begin{example}
We consider the matrix equation
$$AXB=C,$$
where
$A  = \left[
\begin{array}{rr}
 1 & -2   \\
-2 &  4
\end{array}
\right]$,
$B=\left[
\begin{array}{rrr}
1 & 2 & 1 \\
1 & 2 & 1 \\
1 & 2 & 1
\end{array}
\right]$
and
$C=\left[
\begin{array}{rrr}
 1  &  2 &  1 \\
-2  & -4 & -2
\end {array}
\right]$.
If we take $Y=XB$, we obtain the system
$$AY=C.$$
It is easy to check that the matrix $A$ is of the rank $a=1$ and for matrices
$Q = \left[
\begin{array}{cc}
1 & 0   \\
2 & 1
\end{array}
\right]$
and
$P=\left[
\begin{array}{cc}
1 & 2  \\
0 & 1
\end {array}
\right]$
the equality $QAP=E_a$ holds.
Based on the Theorem \ref{theorem AX=C}, the equation $AY=C$ can be rewritten in the system form
$$
\begin{array}{rcl}
AY_{\downarrow1} & = &
\left[
\begin{array}{r}
 1  \\
-2
\end{array}
\right]                    \\[2ex]
AY_{\downarrow2} & = &
\left[
\begin{array}{r}
 2  \\
-4
\end{array}
\right]                     \\[2ex]
AY_{\downarrow3} & = &
\left[
\begin{array}{r}
 1  \\
-2
\end{array}
\right].
\end{array}
$$
Combining the Theorem \ref{main theorem 1} with the equality
$$\left[
\begin{array}{ccc}
c'_{1} & c'_{2} & c'_{3} \\
0      & 0      & 0
\end{array}
\right]=
\left[
\begin{array}{cc}
1 & 0     \\
2 &  1
\end{array}
\right]
\left[
\begin{array}{rrr}
1  &  2 & 1   \\
-2 & -4 & -2
\end{array}
\right]=
\left[
\begin{array}{rrr}
1  & 2 & 1   \\
0  & 0 & 0
\end{array}
\right]$$
yields
$$\begin{array}{rcl}
Y_{\downarrow1} & = &
P\left[
\begin{array}{c}
1                                    \\
\underbrace{v-vz_{11}+z_{21}}_{\alpha}
\end{array}
\right]                             \\[3ex]
Y_{\downarrow2} & = &
P\left[
\begin{array}{c}
2                                     \\
\underbrace{2v-2vz_{12}+z_{22}}_{\beta}
\end{array}
\right]                             \\[3ex]
Y_{\downarrow3} & = &
P\left[
\begin{array}{c}
1                                    \\
\underbrace{v-vz_{13}+z_{23}}_{\gamma}
\end{array}
\right],
\end{array}
$$
for an arbitrary matrix
$Z=\left[
\begin{array}{rrr}
z_{11} &  z_{12} & z_{13}   \\
z_{21} & -z_{22} & z_{23}
\end{array}
\right].$
Therefore, the general solution of the system $AY=C$ is
$$
Y  =
P\left[
\begin{array}{ccc}
1      &  2    & 1 \\
\alpha & \beta & \gamma
\end{array}
\right].
$$
From now on, we consider the system
$$XB=D$$
for
$$D =
P\left[
\begin{array}{ccc}
1      &  2    & 1 \\
\alpha & \beta & \gamma
\end{array}
\right] =
\left[
\begin{array}{ccc}
1+ 2\alpha & 2+2\beta &  1+2\gamma \\
    \alpha &    \beta &     \gamma
\end{array}
\right].
$$
There are regular matrices
$R  =
\left[
\begin{array}{rrr}
 1 & 0 & 0 \\
-1 & 1 & 0 \\
-1 & 0 & 1
\end{array}
\right]$
and
$S  =
\left[
\begin{array}{rrr}
1 & -2 & -1 \\
0 &  1 &  0  \\
0 &  0 &  1
\end{array}
\right]$
such that $RBS=E_b$ holds.
Since the rank of the matrix $B$ is $b=1$, according to the Lemma \ref{D'=DS}
all entries of the last two columns of the matrix $D'=DS$ are zeros,
i.e. we have $\gamma=\alpha$, $\beta=2\alpha$.
Hence, we get that the matrix $D'$ is of the form
$D' =
\left[
\begin{array}{ccc}
1+ 2\alpha & 0 &  0 \\
    \alpha & 0 &  0
\end{array}
\right]$.
Applying the Theorem \ref{theorem XB=D}, we obtain
$$X  =
\left[
\begin{array}{ccc}
1+ 2\alpha & \underbrace{(1+2\alpha-t_{11})m_{11}+t_{12}}_{\beta{1}}   & \underbrace{(1+2\alpha-t_{11})m_{12}+t_{13}}_{\beta{2}} \\
    \alpha & \underbrace{(   \alpha-t_{21})m_{11}+t_{22}}_{\gamma_{1}} & \underbrace{(   \alpha-t_{12})m_{12}+t_{23}}_{\gamma_{2}}
\end{array}
\right]R,
$$
for an arbitrary matrix
$T  =
\left[
\begin{array}{ccc}
t_{11} & t_{12} & t_{13} \\
t_{21} & t_{22} & t_{23}
\end{array}
\right]$.
Finally, the solution of the system $AXB=C$ is
$$X  =
\left[
\begin{array}{ccc}
1+ 2\alpha-\beta_{1}-\beta_{2}   & \beta_{1} &  \beta_{2}  \\
    \alpha-\gamma_{1}-\gamma_{2} & \gamma_{1} &  \gamma_{2}
\end{array}
\right].
$$
\end{example}

\bigskip

\bigskip

\noindent
\mbox{\textsc{Acknowledgment.}} Research is partially supported by the Ministry of Science and Education of the Republic of Serbia,
Grant No.174032.

\bigskip

\bigskip

\end{document}